\documentclass[11pt]{article}

\usepackage{epsfig}
\usepackage{graphicx}
\usepackage{amsbsy}
\usepackage{amsmath}
\usepackage{amsfonts}
\usepackage{amssymb}
\usepackage{textcomp}
\usepackage{hyperref}
\usepackage{aliascnt}

\newcommand{\mcm}[3]{\newcommand{#1}[#2]{{\ensuremath{#3}}}} 

\mcm{\tuple}{1}{\langle #1 \rangle}
\mcm{\name}{1}{\ulcorner #1 \urcorner}
\mcm{\Nbb}{0}{\mathbb{N}}
\mcm{\Zbb}{0}{\mathbb{Z}}
\mcm{\Rbb}{0}{\mathbb{R}}
\mcm{\Cbb}{0}{\mathbb{C}}
\mcm{\Qbb}{0}{\mathbb{Q}}
\mcm{\Acal}{0}{\cal A}
\mcm{\Bcal}{0}{\cal B}
\mcm{\Ccal}{0}{\cal C}
\mcm{\Dcal}{0}{\cal D}
\mcm{\Ecal}{0}{\cal E}
\mcm{\Fcal}{0}{\cal F}
\mcm{\Gcal}{0}{\cal G}
\mcm{\Hcal}{0}{\cal H}
\mcm{\Ical}{0}{\cal I}
\mcm{\Jcal}{0}{\cal J}
\mcm{\Kcal}{0}{\cal K}
\mcm{\Lcal}{0}{\cal L}
\mcm{\Mcal}{0}{\cal M}
\mcm{\Ncal}{0}{\cal N}
\mcm{\Ocal}{0}{{\cal O}}
\mcm{\Pcal}{0}{{\cal P}}
\mcm{\Qcal}{0}{{\cal Q}}
\mcm{\Rcal}{0}{{\cal R}}
\mcm{\Scal}{0}{{\cal S}}
\mcm{\Tcal}{0}{{\cal T}}
\mcm{\Ucal}{0}{{\cal U}}
\mcm{\Vcal}{0}{{\cal V}}
\mcm{\Wcal}{0}{{\cal W}}
\mcm{\Xcal}{0}{{\cal X}}
\mcm{\Ycal}{0}{{\cal Y}}
\mcm{\Zcal}{0}{{\cal Z}}
\mcm{\Mfrak}{0}{\mathfrak M}

\mcm{\restric}{0}{\upharpoonright}
\mcm{\upset}{0}{\uparrow}
\mcm{\onto}{0}{\twoheadrightarrow}
\mcm{\smallNbb}{0}{{\small \mathbb{N}}}
\DeclareMathOperator{\preop}{op}
\mcm{\op}{0}{^{\preop}}

{\begin{array}{c}
\setlength{\unitlength}{1em}}%
{\end{array}}

\usepackage{amsthm}

\newcommand{\theoremize}[2]{\newaliascnt{#1}{thm} \newtheorem{#1}[#1]{#2} \aliascntresetthe{#1}}

\theoremstyle{plain}

\theoremize{lem}{Lemma}
\theoremize{skolem}{Skolem}
\theoremize{fact}{Fact}
\theoremize{sublem}{Sublemma}
\theoremize{claim}{Claim}
\theoremize{obs}{Observation}
\theoremize{prop}{Proposition}
\theoremize{cor}{Corollary}
\theoremize{que}{Question}
\theoremize{oque}{Open Question}
\theoremize{con}{Conjecture}

\theoremstyle{definition}
\theoremize{dfn}{Definition}
\theoremize{rem}{Remark}
\theoremize{eg}{Example}
\theoremize{exercise}{Exercise}
\theoremstyle{plain}

\usepackage{verbatim}
\usepackage{enumerate}
\usepackage[all]{xy}

\usepackage{subfig}

\usepackage{subfig}
\usepackage{wrapfig}
\usepackage{caption}
\usepackage[T1]{fontenc}

\title{Stotting in positional games\footnote{This research was partly supported by the ANR project P-GASE (ANR-21-CE48-
0001-01).}}
\author{Johannes Carmesin\thanks{TU Freiberg, funded by DFG, project number 546892829.} \and Yannick Mogge\thanks{Université Claude Bernard Lyon 1}}
\date{}

\newcommand{\sm}{\setminus}

\mcm{\Fbb}{0}{\mathbb{F}}

\begin{document}

\makeatletter
\renewcommand{\@fnsymbol}[1]{%
  \ifcase#1\or *\or **\or ***\else\arabic{#1}\fi}
\makeatother

\maketitle

\begin{abstract}
We introduce variants of the Maker–Breaker and Waiter–Client games, which we call \emph{stotting}, in which a player grants a slight advantage to the opponent. We prove that a winning strategy in either stotting variant yields winning strategies for both Maker and Waiter in the classical setting.
Several existing Maker strategies in the literature in fact win with stotting, and therefore automatically provide both classical winning strategies (and similarly for stotting Waiter).

Knox previously disproved a conjecture of Beck asserting that whenever Maker wins the Maker–Breaker game, Waiter also wins the corresponding Waiter–Client game; in this sense, our framework may be viewed as a way of repairing Beck’s conjecture.
\end{abstract}

\section{Introduction}

In this short note, we propose a way to unify winning strategies for Maker and Waiter in the well-studied Maker–Breaker and Waiter–Client games.

Here we consider the standard \emph{Maker-Breaker game} played on a hypergraph $\Hcal=(X,\mathcal{F})$ with Breaker being the first player to make a move. The vertex set $X$ of $\Hcal$ 
is referred to as the \emph{board} and the edge set $\Fcal \subset 2^X$ represents the \emph{winning sets} for Maker. In the \emph{Waiter-Client game} the elements of the board are distributed differently: in each turn, Waiter offers two elements, Client takes one, and Waiter receives the other\footnote{In case that at the end of the game there is only one element of the board available, this element goes to Client.}. Here, $\Fcal$ encodes the winning sets for Waiter.

In games such as Lehman’s connectivity game~(\cite{CMP2009},\cite{L1964}), the Hamiltonicity game~(\cite{CE1978},\cite{CGHHMM2020}), fixed spanning tree game~(\cite{CFGHL2013},\cite{CGHHMM2020}), and tree universality game~\cite{AABCHM2025}, 
Maker and Waiter are known to have winning strategies of a similar flavour. This led Beck~\cite{B2002} to conjecture that this phenomenon holds in general, and later Csernenszky, Mándity and Pluhár~\cite{CMP2009} stated the conjecture, that whenever Maker has a winning strategy, so does Waiter in the corresponding game on the same hypergraph (see Conjecture 1 in~\cite{CMP2009}). However, this conjecture was later disproved by Knox~\cite{K2012}. 
In this note, we propose two games that may be regarded as a way of repairing Beck’s conjecture, as follows. 
We shall prove that if Maker has a winning strategy with \lq stotting\rq—meaning that they grant a slight advantage to their opponent—then they also have a winning strategy as Waiter; see \autoref{stotting}. We also give a corresponding construction with the roles of \lq Maker\rq\ and \lq Waiter\rq\ exchanged. 
We illustrate the unifying character of our approach by observing that several existing strategies in the literature are in fact stotting, and therefore simultaneously yield winning strategies for both Maker and Waiter in the classical setting. In this sense, our framework may be viewed as restoring Beck’s conjecture.

\begin{figure}[t]
\vspace{-0.8cm}
\centering
\begin{minipage}{0.13\textwidth}
  \centering
  \includegraphics[width=\linewidth]{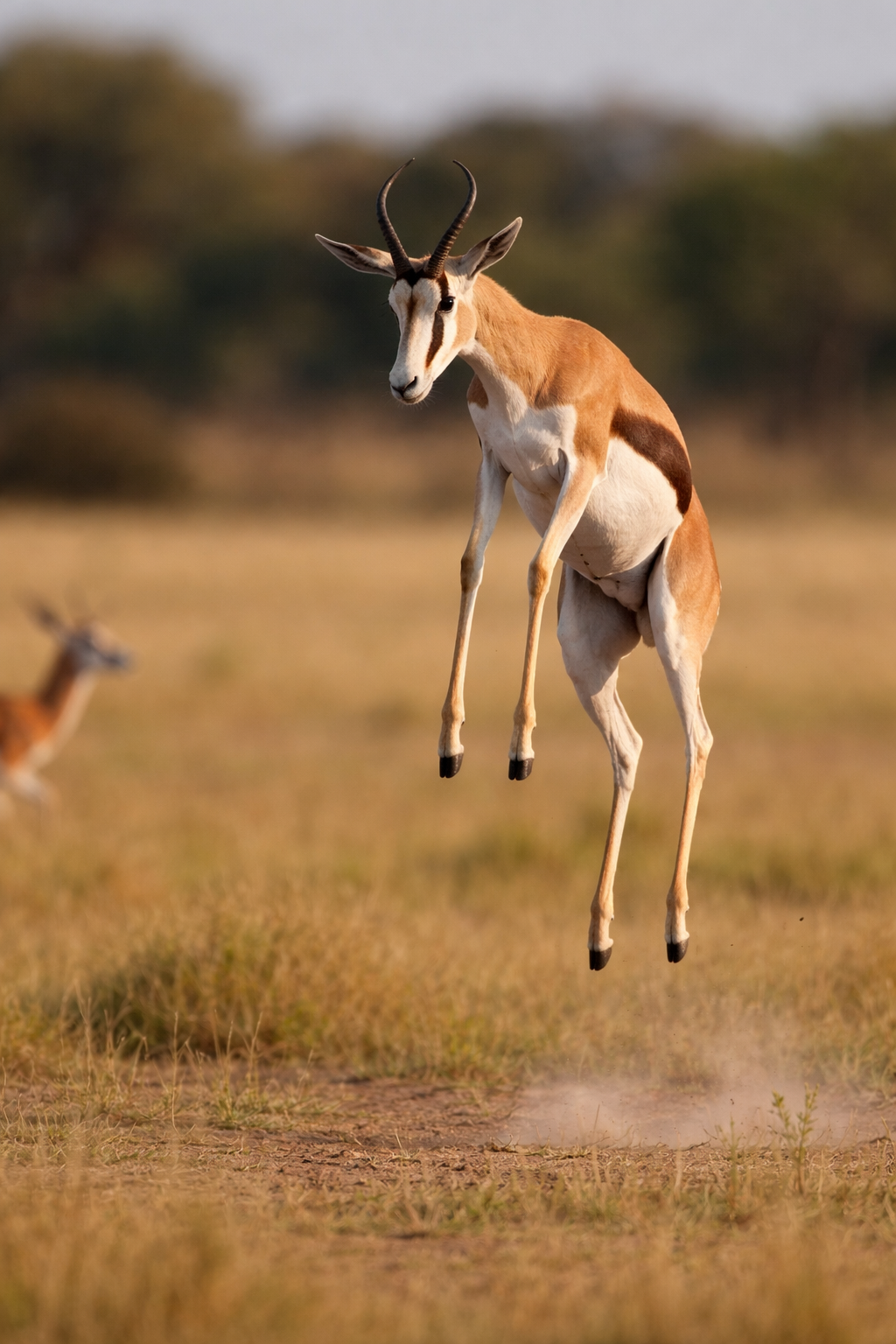}
\end{minipage}\hfill
\begin{minipage}{0.83\textwidth}
\vspace{0.4cm}
\captionsetup{font=footnotesize}
  \caption{\footnotesize When a lion pursues a herd of antelopes, some individuals engage in \emph{stotting}—high, stiff-legged jumps that function as an honest signal of escape ability, advertising to the predator that they are in good condition and unlikely to be worth chasing. In this paper, we use the term \lq stotting\rq\ to describe games where players can win so strongly that they can even win another game.}
  \label{stotting}
\end{minipage}
\vspace{-0.8cm}
\end{figure}

The \emph{stotting Maker-Breaker game} is defined like the \lq Maker-Breaker game\rq\ except that before each turn (which consists of two moves) Breaker decides who goes first.

\begin{prop}\label{p1}
If stotting Maker has a winning strategy on $\Hcal$, then they also have a winning strategy as Waiter on $\Hcal$.
\end{prop}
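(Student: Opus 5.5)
The idea is to have Waiter run a stotting Maker strategy $S$ in the background and keep a simulated stotting Maker--Breaker play on $\Hcal$. We maintain the invariant that after every Waiter--Client round, Waiter's elements are exactly simulated Maker's elements and Client's elements are exactly simulated Breaker's elements. The simulated play will follow $S$ and be legal, so it ends in a win for Maker; hence Waiter's final set contains a winning set.

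\textbf{One round.} Suppose the invariant holds and at least two free elements remain. In the simulated game Breaker is about to choose who moves first in the next turn. Waiter evaluates both options under $S$.
\begin{enumerate}[(i)]
\item Let $a$ be the element $S$ tells Maker to claim if Breaker lets Maker move first.
\item Let $b$ be the element $S$ tells Maker to claim if Breaker moves first and claims $a$.
\end{enumerate}
Both are free, and $b\neq a$ because $a$ is already taken in the position where $b$ is chosen. Waiter offers $\{a,b\}$, and the round is translated according to Client's choice.
\begin{itemize}
\item If Client takes $a$, Waiter receives $b$. This is the simulated turn in which Breaker goes first and claims $a$, and Maker answers $b$ according to $S$.
\item If Client takes $b$, Waiter receives $a$. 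This is the simulated turn in which Maker goes first and claims $a$ according to $S$, and Breaker then claims $b$.
\end{itemize}
In both cases the simulated turn is a legal stotting turn consistent with $S$, and the invariant is preserved.

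\textbf{The end of the game.} If exactly one free element remains, it goes to Client. In the simulation, Breaker chooses to move first and claims it, which is again a legal stotting turn. This also covers the convention that the classical Maker--Breaker game starts with Breaker, since stotting allows Breaker to choose to move first.

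So the whole Waiter--Client play is, up to relabelling, a complete stotting Maker--Breaker play in which Maker follows $S$. Maker therefore owns some $F\in\Fcal$ in the simulation, and Waiter owns the same $F$.

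The step needing the most care is checking that the translated turn really is consistent with $S$ in both branches. The subtle point is that $S$ may depend on Breaker's choice of order, and that choice is only revealed after Client picks. The construction handles this because $a$ and $b$ are computed from $S$ for the two possible orders before the offer is made.
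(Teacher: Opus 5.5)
Your proposal is correct and is essentially the paper's argument: your $a$ and $b$ are the paper's $v_1$ and $v_2$, and the two possible choices of Client correspond exactly to the paper's reduced games $\Hcal\sm v_1/v_2$ (Client takes $a$) and $\Hcal\sm v_2/v_1$ (Client takes $b$). The only difference is presentational: you unroll the paper's induction on reduced hypergraphs into a single running simulation with an invariant, which also makes the odd-board endgame explicit.
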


In \autoref{p2} below we observe that a sufficient criterion for stotting Maker to have a winning strategy is that they win the Maker-Breaker game with Breaker always taking two moves instead of one.

\begin{eg}\label{Ham_eg}
Given $n,b\in \Nbb$, the (1:b) \emph{Hamiltonicity game on $K_n$} is the game for the hypergraph $\Hcal_n$ whose board is the set of edges of $K_n$ and whose winning sets are the Hamilton cycles of $K_n$, and during each turn Breaker is allowed to claim up to $b$ edges. It is well-known (see~\cite{K2011}), that Maker has a winning strategy for the (1:b) Maker-Breaker Hamiltonicity game on $K_n$ if $b=(1+o(1))\frac{n}{\ln n}$ and thus on the (1:2) Maker-Breaker Hamiltonicity game on $K_n$ with $n$ sufficiently large. 
By \autoref{p2} and \autoref{p1} combined, this fact implies that Waiter has a winning strategy on $\Hcal_n$.
\end{eg}

The \emph{stotting Waiter-Client} game is defined like the classical \lq Waiter-Client game\rq\ except that 
in each turn, Client selects an unclaimed element of the board, then Waiter adds a second one, and offers both to Client, who picks one and gives the other one to Waiter. 

\begin{prop}\label{p3}
If stotting Waiter has a winning strategy on $\Hcal$, then they have a winning strategy as Maker on $\Hcal$.
\end{prop}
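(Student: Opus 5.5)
The plan is a direct strategy-stealing simulation. Maker runs the winning strategy of stotting Waiter on $\Hcal$ in the background. Breaker's moves play the role of Client's \emph{selections}, and Maker's moves play the role of the elements Waiter \emph{adds}. The simulated Client always picks the element Breaker selected, so that Waiter's share is always exactly Maker's share.

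Concretely, I would maintain by induction on the number of turns the invariant that, after each full turn, the set of elements claimed by Maker in the real game equals the set received by Waiter in the simulated stotting Waiter-Client game, and likewise for Breaker and Client. Both games are then in the same position on the same board $X$.

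The simulation of a turn goes as follows.
\begin{enumerate}[(i)]
\item Breaker moves first in the Maker-Breaker game and claims some free element $x$. Since the positions agree, $x$ is also unclaimed in the simulated game, so Maker lets Client select $x$.
\item Waiter's strategy now names a second unclaimed element $y\neq x$ and offers $\{x,y\}$.
\item Maker assumes that Client takes $x$ and gives $y$ to Waiter. This is a legal Client move, since Client may pick either offered element.
\item In the real game Maker claims $y$. This is legal because $y$ was unclaimed and differs from $x$.
\end{enumerate}
After this turn Breaker has gained $x$ and Maker has gained $y$, exactly as Client and Waiter did, so the invariant is preserved.

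The only point needing care is the end of the game, when the board is exhausted. If Breaker claims the last free element $x$, there is no $y$ for Waiter to add. By the convention for Waiter-Client games, that single element then goes to Client, which matches Breaker receiving it. Consistency also needs the turn structures to align: Breaker moves first in each Maker-Breaker turn, and Client selects first in each stotting turn, so every real move has a simulated counterpart.

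When the game ends, Maker's set equals the set Waiter obtained in a legal play of the stotting game against some Client strategy. Since Waiter's strategy is winning, that set contains a member of $\Fcal$, so Maker wins. I expect no real obstacle here; the main thing is to check carefully that this end-of-board case is handled consistently with the game conventions.
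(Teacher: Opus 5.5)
Your proposal is correct and is essentially the paper's argument. The paper phrases it as an induction: Maker answers Breaker's claim $v$ with the element $w$ that stotting Waiter would add after Client selects $v$, and then passes to the game on $\Hcal\setminus v/w$ (the branch where Client keeps $v$), so your turn-by-turn simulation invariant is just that induction unrolled.
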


In \autoref{lehman_prop} below we shall construct a winning strategy for stotting Waiter in the classic Lehman connectivity game (see~\cite{L1964}), which by \autoref{p3} entails two winning strategies for Waiter and Maker, respectively, unifying existing proofs in the literature. 
The main body of the paper follows in the next section, and then we finish with a short outlook.

\section{Proofs}

In this short section, we prove the propositions stated or mentioned in the introduction. 
Given two vertices $v_1$ and $v_2$ of a hypergraph $\Hcal=(X,\Fcal)$, by $\Hcal\sm v_1/v_2$ we denote the hypergraph with vertex set $X-v_1-v_2$ and the edge set consists of all $f$ so that $f$ or $f+v_2$ are in $\Fcal$ and $v_1\notin f$.

\begin{proof}[Proof of \autoref{p1}.]
We are going to show that if stotting Maker has a winning strategy on $\Hcal$, then they also have a winning strategy as Waiter on $\Hcal$.
Arguing by induction, it suffices to find two vertices $v_1$ and $v_2$ of $\Hcal$ so that Maker has a winning strategy for the stotting Maker-Breaker games on $\Hcal\sm v_1/v_2$ and  $\Hcal\sm v_2/v_1$. Consider the stotting Maker-Breaker game on $\Hcal$ with Breaker deciding that Maker takes the first move. Maker's winning strategy determines which vertex to claim in the next move; call it $v_1$. Hence for every vertex $w$, Maker has a winning strategy for the  stotting Maker-Breaker game on $\Hcal\sm w/v_1$. 

Now consider the stotting Maker-Breaker game in $\Hcal$ with Breaker deciding that they take the first move, and playing $v_1$. Denote Maker's response according to their winning strategy by $v_2$.
Then Maker has a winning strategy for the  stotting Maker-Breaker game on $\Hcal\sm v_1/v_2$. This completes the proof. 
\end{proof}


\begin{prop}\label{p2}
If Maker has a winning strategy in a $(1:2)$ Maker-Breaker game on $\Hcal = (X,\Fcal)$, then they have a winning strategy as stotting Maker 
on $\Hcal$.
\end{prop}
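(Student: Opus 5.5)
The plan is a strategy-stealing simulation: stotting Maker runs, in the background, a winning strategy $\sigma$ for Maker in the $(1:2)$ game, and treats the stotting game as a $(1:2)$ game in which Breaker sometimes plays fewer than two elements per round. In the $(1:2)$ game as set up in the paper, Breaker moves first, so a round consists of Breaker claiming two elements and then Maker claiming one. I will use the standard monotonicity fact for Maker–Breaker games: giving Breaker fewer elements, or giving Maker extra elements, never hurts Maker. Formally, Maker's strategy may ``pretend'' that Breaker claimed additional arbitrary free elements. If Maker is later forced to claim an element already pretended to be Breaker's, this only helps Maker, since Maker then also owns an element the simulation thinks Breaker owns.

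The simulation proceeds round by round. In the stotting game, consider a turn consisting of two moves, one by each player, in an order chosen by Breaker.

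\emph{Breaker goes first in the turn.} Breaker claims an element $b$. Maker interprets this as the first of Breaker's two elements in a round of the $(1:2)$ game and adds a fictitious second Breaker element $b'$, chosen arbitrarily among free elements. Maker then answers with $\sigma$'s response.

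\emph{Maker goes first in the turn.} Maker must move before seeing Breaker's element. Here the idea is to combine this turn with the previous one. I will maintain the invariant that after each stotting turn, the simulated $(1:2)$ position has Maker's set contained in Maker's real set, and Breaker's real set contained in the simulated Breaker set. The simulated Breaker set may additionally contain fictitious elements.

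The key point is to use the fictitious element from the previous round, or to create slack. When Maker must move first, Maker plays $\sigma$'s response to a simulated Breaker pair $\{b',b''\}$, both fictitious, where $b''$ is arbitrary. Breaker then plays a real element $c$. If $c$ is already fictitious-Breaker, nothing changes. Otherwise $c$ is absorbed as one of the two Breaker elements of the next simulated round: Maker saves $c$, and at the next turn it becomes one of Breaker's two elements. This is exactly why Breaker's allowance of two elements is needed.

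A cleaner bookkeeping is to let each simulated Breaker round consist of at most two real Breaker elements claimed since Maker's last simulated move, padded with fictitious ones. Between two consecutive Maker moves in the stotting game, Breaker makes at most two moves. This happens exactly when the order is Maker–Breaker followed by Breaker–Maker. So feeding $\sigma$ those real elements, padded with fictitious elements up to two, gives a legal $(1:2)$ history. Maker's first move is handled the same way: if Breaker lets Maker start, the simulated Breaker pair is entirely fictitious.

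The main obstacle is handling conflicts. First, $\sigma$ may tell Maker to claim an element already taken by real Breaker; this cannot happen, because real Breaker elements are always fed into the simulation before Maker moves. Second, a real Breaker move may hit an element the simulation regards as Maker's or as fictitious. The former is impossible, since Maker really owns it. In the latter case Maker simply uses a new fictitious element for the pad, so the number of real plus fictitious Breaker elements per round stays at two.

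Finally, at the end of the game, Maker's real set equals the simulated Maker set, which contains a winning set since $\sigma$ wins. Care is needed in the last rounds, when too few free elements remain for padding. In that case Breaker simply claims fewer elements, which is allowed in the usual convention, or equivalently does not hurt Maker by monotonicity.
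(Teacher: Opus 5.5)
Your proposal is correct and is essentially the paper's argument. Your ``cleaner bookkeeping'' observation, that Breaker makes at most two moves between consecutive Maker moves and at most one before Maker's first, is exactly the paper's regrouping of the $(1:2)$ game into Breaker--Maker--Breaker turns in which Breaker skips one move per turn; your padding with fictitious Breaker elements is the explicit justification of the paper's claim that Maker still wins the lazy variant. (Your first sketch, which always pads when Breaker moves first, would overfill a round when a Breaker element from a preceding Maker-first turn is still pending, but the cleaner bookkeeping you switch to fixes this.)
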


\begin{proof}
If Maker has a winning strategy in the $(1:2)$ Maker-Breaker game, then
also have a winning strategy in the lazy variant, where Breaker is allowed to skip turns (indeed, not playing a move is always worse than making a rubbish move); hence below we may assume that Maker still has a winning strategy if from time to time Breaker decides to skip moves. 
In the $(1:2)$ Maker-Breaker game, a turn consists of Breaker taking two moves and then Maker taking a single move. 
Now we group moves together differently into turns by allocating the first move of each turn to the previous turn, and skipping the very first move of Breaker. Now each turn starts with a move of Breaker, then a move of Maker and then a move of Breaker. If Breaker restricts themself to skip exactly one move each turn, we obtain the stotting Maker-Breaker game. Hence if Maker has a winning strategy in the $(1:2)$ Maker-Breaker game, then they have a winning strategy as stotting Maker.
\end{proof}

\begin{proof}[Proof of \autoref{p3}.]
Arguing by induction, we are going to show that if stotting Waiter has a winning strategy on $\Hcal$, then they have a winning strategy as Maker on $\Hcal$. Since stotting Waiter has a winning strategy on $\Hcal$ by assumption, for every $v \in X$ we can find an element $w \in X$ such that if Client suggests $v$ as the first element, Waiter can then offer $w$ as the second element, therefore winning the stotting Waiter-Client game on $\Hcal\setminus v / w$ and $\Hcal \setminus w / v$. Thus, in the Maker-Breaker game on $\Hcal$, Maker can reply to Breaker claiming $v$ by claiming $w$, thus reducing the game to the Maker-Breaker game on $\Hcal\setminus v /w$, which turns out to be a win for Maker, since we know that stotting Waiter wins on $\Hcal\setminus v /w$.
\end{proof}


A classical result in positional games is \emph{Lehman's connectivity game} which is played on the ground set of a matroid $M$ that has two disjoint spanning sets, and the winning sets are the spanning sets. 
The following unifies the known proofs that Maker and Waiter can win the connectivity game.

\begin{prop}\label{lehman_prop}
Stotting Waiter wins Lehman's connectivity game.
\end{prop}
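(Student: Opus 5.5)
We prove the statement by induction on $|E(M)|$, using the same contraction/deletion reduction as in the proofs of \autoref{p1} and \autoref{p3}. Assume $M$ has two disjoint spanning sets $S_1,S_2$. Consider one turn of the stotting Waiter–Client game: Client names an element $v$, Waiter adds an element $w$, Client takes one of them and Waiter gets the other. If Waiter gets $x\in\{v,w\}$ and Client gets $y$, then the rest of the game is the stotting Waiter–Client game on $\Hcal\sm y/x$. For the connectivity game, $\Hcal\sm y/x$ is, up to irrelevant winning sets, Lehman's game on the matroid $(M\sm y)/x$: a set $f$ avoiding $x,y$ satisfies "$f$ or $f+x$ spans $M$" exactly when $f$ spans $(M\sm y)/x$. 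By induction it therefore suffices to choose $w$ so that both $(M\sm v)/w$ and $(M\sm w)/v$ again contain two disjoint spanning sets. The base case is a matroid with no elements, or of rank $0$, where Waiter has trivially won. If $v$ is a loop, Waiter can pick $w$ to be another loop, or handle the loop separately, since loops are irrelevant.

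The key step is to choose $w$ by the classical basis-exchange argument. We may take $S_1=B_1$ and $S_2=B_2$ to be disjoint bases.

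\textbf{Case 1: $v\notin B_1\cup B_2$.} Take any $w\in B_1$. Then $(M\sm v)/w$ contains the disjoint spanning sets $B_1-w$ and $B_2$. In $(M\sm w)/v$ we use $B_2$, which still spans after contracting $v$ because contraction preserves spanning. We also need a set from $B_1-w+v$ that spans $M/v$. Since $v$ is contracted, $B_1-w$ must span $M/v$, which holds iff $w$ lies in the fundamental circuit of $v$ with respect to $B_1$. So choose $w$ in that circuit, other than $v$; this is possible because $v$ is not a loop.

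\textbf{Case 2: $v\in B_1$, say.} The fundamental cocircuit of $v$ with respect to $B_1$ and the fundamental circuit of $v$ in $B_2+v$ together give, by the symmetric exchange property, an element $w\in B_2$ such that both $B_1-v+w$ and $B_2-w+v$ are bases. Then:
\begin{itemize}
\item In $(M\sm w)/v$, the set $B_1-v$ spans and $B_2-w$ spans, since $B_2-w+v$ is a basis. These two sets are disjoint.
\item In $(M\sm v)/w$, the set $B_2-w$ spans and $B_1-v$ spans, since $B_1-v+w$ is a basis.
\end{itemize}
So both contractions inherit two disjoint spanning sets.

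The main obstacle is making the translation between $\Hcal\sm y/x$ and the minor $(M\sm y)/x$ fully precise. A further point to check is the end of the game: when a single element remains it goes to Client, and in the stotting version Waiter may not be able to add an element. In that situation we need $M$ to already be spanned by Waiter's contracted set, which holds because the rank is $0$ at that point. Symmetric basis exchange is standard, and I would cite it rather than reprove it.
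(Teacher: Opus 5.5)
\textbf{The gap is the claim that ``loops are irrelevant,'' and it cannot be patched: the statement you are inducting on is false.} Your Cases 1 and 2 are correct as single steps, since both minors do keep two disjoint spanning sets. The trouble is that in the stotting game Client chooses the first element of each pair. If Client names a loop $v$ and no other loop is left, Waiter must add a non-loop $w$. Client keeps $w$, and play continues on $(M\sm w)/v=M\sm w$, which need not have two disjoint spanning sets.

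Concretely, let $M$ be the rank-one matroid on $\{a,b,\ell\}$ with $a,b$ parallel and $\ell$ a loop. Here $\{a\}$ and $\{b\}$ are disjoint spanning sets. But Client names $\ell$, takes whichever of $a,b$ Waiter adds, and receives the last element, so Waiter ends with $\{\ell\}$.

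Loops are not an avoidable corner case either, because your own Cases 1 and 2 contract an element and turn its parallel copies into loops. Take the graph on $x,y,z$ with three parallel edges $c,c',c''$ between $x$ and $y$, plus $d=yz$ and $e=xz$. It is loopless and has disjoint spanning trees $\{c,d\}$ and $\{c',e\}$. For $v=c$ your Case 2 gives $w=c'$, and $(M\sm c)/c'$ is exactly the three-element example above.

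In fact Client wins this five-edge game outright by naming $c$:
\begin{itemize}
\item If Waiter adds a parallel copy, Client next names the last copy and takes whichever of $d,e$ Waiter adds.
\item If Waiter adds $d$ (or $e$), Client takes it, then names the other of $d,e$ and takes that too.
\end{itemize}
Either way Waiter finishes with two parallel edges. So no choice of $w$ can make an induction over all matroids with two disjoint spanning sets work.

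\textbf{What does hold, and how it relates to the paper.} The statement is true for matroids whose ground set is the disjoint union of two bases $B_1,B_2$. For that case your Case 2 alone is a complete proof and coincides with the paper's argument. Symmetric exchange gives $w\in B_2$ for $v\in B_1$, and then $B_1-v$ and $B_2-w$ are bases of both $(M\sm v)/w$ and $(M\sm w)/v$ and partition the remaining ground set. So the partition invariant is preserved. As a result no loops or extra elements ever appear, the board always has even size, and the one-element end rule never triggers. Your translation of $\Hcal\sm y/x$ into $(M\sm y)/x$ is also valid there, since $y$ is not a coloop of $M/x$.

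The paper reaches this case by asserting that one may assume every element lies in $B_1\cup B_2$. Your Case 1 and loop remark are in effect an attempt to justify that reduction. The examples above show it fails for stotting Waiter, unlike classical Waiter, who can simply offer the extra elements last. The fix is to state the induction hypothesis as ``the ground set is partitioned into two bases'' and drop Case 1 and the loop discussion.
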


In order to prove \autoref{lehman_prop} we shall use the following.

\begin{lem}[Symmetric base exchange \cite{brualdi1969comments}]\label{matroid-lem}
For any matroid $M$ with two bases $B_1$ and $B_2$ and any $e\in B_1$ there is $f\in B_2$ such that
the sets $B_1-e+f$ and $B_2-f+e$ are bases. \qed
\end{lem}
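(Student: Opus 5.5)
The plan is the classical argument that pairs a fundamental circuit with a fundamental cocircuit and intersects them. First I dispose of the trivial case. If $e\in B_2$, I take $f=e$: then $B_1-e+f=B_1$ and $B_2-f+e=B_2$ are both bases. So from now on I assume $e\in B_1\sm B_2$.

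Next I translate each of the two requirements on $f$ into membership in a specific set.

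\emph{Second condition.} Since $B_2$ is a base and $e\notin B_2$, the set $B_2+e$ contains a unique circuit $C$, and $e\in C$. For $f\in B_2$, the set $B_2-f+e$ is a base exactly when $f\in C$. Indeed, $B_2-f+e$ has the right size. It is independent if and only if it avoids the unique circuit $C$ of $B_2+e$, that is, if and only if $f\in C$.

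\emph{First condition.} Let $D=X\sm \mathrm{cl}(B_1-e)$. This is the fundamental cocircuit of $e$ with respect to $B_1$, and $e\in D$ because $B_1$ is independent. For any $f$, the set $B_1-e+f$ is a base exactly when $f\notin \mathrm{cl}(B_1-e)$, that is, exactly when $f\in D$.

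So it suffices to find $f\in (C\cap D)\sm\{e\}$. Such an $f$ lies in $C-e\subseteq B_2$, and it satisfies both conditions.

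The key step, and the only real content, is showing that $C\cap D\neq\{e\}$, which is orthogonality of circuits and cocircuits. I would prove it directly from closure properties rather than citing it. Suppose $C\cap D=\{e\}$. Then $C-e\subseteq X\sm D=\mathrm{cl}(B_1-e)$. Since $C$ is a circuit, $e\in \mathrm{cl}(C-e)$. By monotonicity and idempotence of closure,
\[
\mathrm{cl}(C-e)\subseteq \mathrm{cl}(\mathrm{cl}(B_1-e))=\mathrm{cl}(B_1-e),
\]
so $e\in\mathrm{cl}(B_1-e)$. This contradicts $e\in D$. Hence some $f\neq e$ lies in $C\cap D$, and that $f$ is the required element.

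I expect no serious obstacle here. The only care needed is in verifying the two characterizations: the uniqueness of the circuit in $B_2+e$, and the equivalence between $B_1-e+f$ being a base and $f\notin\mathrm{cl}(B_1-e)$, which holds because $B_1-e$ has rank $r(M)-1$.
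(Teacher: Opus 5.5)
Your proof is correct. The paper gives no argument to compare it against: it states the lemma without proof and cites Brualdi.

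Your argument is the standard self-contained one, and every step checks out:
\begin{itemize}
\item The fundamental circuit $C$ of $e$ in $B_2+e$ turns the condition ``$B_2-f+e$ is a base'' into $f\in C$. The size count uses $e\notin B_2$.
\item The complement $D$ of the hyperplane $\mathrm{cl}(B_1-e)$ turns ``$B_1-e+f$ is a base'' into $f\in D$.
\item The key fact $C\cap D\neq\{e\}$ is circuit--cocircuit orthogonality, which you derive cleanly from monotonicity and idempotence of closure.
\item The trivial case $e\in B_2$ with $f=e$ is also fine, since the statement only asks for $f\in B_2$.
\end{itemize}

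Citing keeps the paper short. Your version makes the note independent of the reference.

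You could shorten it slightly by dropping the cocircuit language. Since $e\in\mathrm{cl}(C-e)$, the set $(B_1-e)\cup(C-e)$ spans $M$. By augmentation, the independent set $B_1-e$ of rank $r(M)-1$ then extends to a base by a single element $f\in C-e\subseteq B_2$. This is your orthogonality step in another form.

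A minor notational point: the paper writes $E(M)$ for the ground set of a matroid and reserves $X$ for the board of a hypergraph, although the two coincide in Lehman's game.
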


\begin{proof}[Proof of \autoref{lehman_prop}.]
Let $M$ be a matroid with two disjoint spanning sets, which contain bases $B_1$ and $B_2$. Our aim is to construct a winning strategy for stotting Waiter in Lehman's connectivity game on the matroid $M$. 
We may assume, and we do assume, that every element of the ground set of $M$ is in one of these bases; hence the bases $B_1$ and $B_2$ partition the ground set $E(M)$. 

Arguing by induction, it suffices to find for every element $e$ of $M$ an element $f$ of $M$ so that the sets $B_1-e-f$ and $B_2-e-f$ are bases in the two matroids $M\sm e/f$ (the matroid obtained from $M$ by deleting $e$ and contracting $f$) and $M\sm f/e$. 
\autoref{matroid-lem} yields for every element $e$ a suitable element $f$, completing the proof. 
\end{proof}

\section{Outlook}

The stotting variants give one possible repair of Beck's conjecture: a winning
strategy for stotting Maker yields winning strategies for both Maker and
Waiter in the classical games, and the analogous statement holds for stotting
Waiter. This opens the way to pursue Beck's original programme in the
strengthened setting of stotting strategies. In many natural positional games,
Maker and Waiter are known to win by similar-looking strategies; revisiting
these examples from the stotting viewpoint may reveal common robust strategies
that unify the Maker and Waiter proofs.

Since for every Maker--Breaker game there is a transversal game, that is, a
Maker--Breaker game in which Maker wins if and only if Breaker wins in the
original game, the symmetry between Maker and Breaker also yields a natural
notion of stotting Breaker. Thus stotting variants arise naturally for Maker,
Waiter and Breaker. It remains open whether there is an equally natural notion
of stotting Client, and whether such a notion would be useful in examples.

This note does not attempt to classify when a classical winning strategy can
be strengthened to a stotting strategy. Natural next questions are how the
usual threshold biases compare with the corresponding stotting thresholds,
and whether there are natural Maker-winning games, beyond artificial
counterexamples, in which Maker cannot win with stotting.

\bibliographystyle{plain}
\bibliography{literatur}

\end{document}